\theoremstyle{definition}
\newtheorem{definition}{Definition}[section]
\newtheorem{proposition}[definition]{Proposition}
\newtheorem{theorem}[definition]{Theorem}
\newtheorem{corollary}[definition]{Corollary}
\newtheorem{lemma}[definition]{Lemma}
\newtheorem{example}[definition]{Example}
\newtheorem{remark}[definition]{Remark}
\newcommand{\RR}{\mathbb{R}}
\newcommand{\ZZ}{\mathbb{Z}}
\newcommand{\opr}{\triangleleft}
\newcommand{\img}{\mathop{\mathrm{im}}}
\newcommand{\ord}[1]{\lvert #1 \rvert}
\newcommand{\norm}[1]{\lVert #1 \rVert}
\newcommand{\bigast}{\mathop{\scalebox{1.5}{\raisebox{-0.2ex}{$\ast$}}}}
\newcommand{\adg}{\mathop{\mathrm{Ad}}}
\newcommand{\conj}{\mathop{\mathrm{Conj}}}
\newcommand{\homog}{\mathrm{h}}
\newcommand{\bdd}{\mathrm{b}}
\renewcommand{\hom}{\mathop{\mathrm{Hom}}}
\begin{document}
    \title{Quasimorphisms of free products of racks and quandles}
    \author{Masamitsu Aoki}
    \thanks{aokim@math.sci.hokudai.ac.jp}
    \date{\today}

    \begin{abstract}
        We show that the second bounded cohomology of the free product of racks and quandles is infinite-dimensional as a real vector space. 
        This is similar to the case of groups.
        As a corollary, we show that the second bounded cohomology of the free rack and the free quandle is infinite-dimensional.
        We also give another proof of this corollary using homogeneous group quasimorphisms.
    \end{abstract}

    \maketitle

    \section{Introduction}
    A rack (see, for example \cite{Fenn1992-dd}) and a quandle (introduced independently in \cite{Joyce1982-my,Matveev1982-ln}) are the algebraic objects whose axioms reflect the Reidemeister moves or the corresponding properties of conjugation in a group. 
    As in the case of groups, we can consider the cohomology of racks and quandles (see for example \cite{Nosaka2017-ms} and references therein). 
    The rack and quandle cocycles provide knot invariants. 
    In \cite{Kedra2024-jg}, K\k{e}dra introduced the bounded version of rack and quandle cohomology.

    The study of bounded cohomology of groups started to develop from Gromov's pioneering paper \cite{Gromov1982-vs}.
    Ordinary cohomology and bounded cohomology behave differently. 
    For example, on the free group, the second cohomology is trivial while the second bounded cohomology is not.
    More precisely, the second bounded cohomology of the free group of rank $\geq 2$ in trivial real coefficients is infinite-dimensional as a real vector space.
    It is known that the second bounded cohomology is infinite-dimensional for some classes of groups, including
    free groups and surface groups \cite{Brooks1981-ez,Mitsumatsu1984-dt}, 
    non-elementary hyperbolic groups \cite{Epstein1997-od}, and
    relatively hyperbolic groups \cite{Bestvina2002-fg}.

    Similar to the case of free groups, the second and higher cohomology of the free racks and the free quandles are trivial \cite{Farinati2014-zt}.
    (Note that their cochain complex begins at degree $-1$ in our convention. 
    Thus, what they refer to as 'first' cohomology corresponds to our second cohomology.)
    It follows from observations in \cite{Kedra2024-jg} that the second bounded cohomology of the free product of racks and quandles (especially, the free racks and the free quandles) is non-trivial.
    Saraf and Singh \cite{Saraf2025-wo} investigate the relation of bounded quandle cohomology with the stable commutator lengths and amenability, provide sufficient conditions for the infinite-dimensionality of the second bounded cohomology of link quandles and the free products of quandles, and prove that the second bounded cohomology of the free quandle is infinite-dimensional. 
    As our main result, we will show the infinite-dimensionality of the second bounded cohomology of the free products of racks by taking a different approach:
    \begin{theorem}\label{theorem:main}
        Let $X = \bigast_{s \in S} X_s$ be the free product of finitely generated racks with $2 \leq \ord{S} < \infty$.
        Then the second bounded cohomology $H_\bdd^2 (X, \RR)$ is infinite-dimensional as a real vector space.
    \end{theorem}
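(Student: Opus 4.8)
The plan is to reinterpret the relevant part of $H_\bdd^2(X,\RR)$ in terms of quasimorphisms and then to produce infinitely many of these. Dualizing the inclusion of bounded into ordinary rack cochains gives a long exact sequence, and an element of the kernel of the comparison map $H_\bdd^2(X,\RR)\to H^2(X,\RR)$ is represented by a bounded $2$-cocycle that becomes a coboundary $\delta^1\phi$ for some (generally unbounded) $1$-cochain $\phi\colon X\to\RR$, where $(\delta^1\phi)(x,y)=\phi(x)-\phi(x\opr y)$. Call $\phi$ a rack quasimorphism if $\sup_{x,y}\ord{\phi(x)-\phi(x\opr y)}<\infty$, and call it trivial if it is the sum of a function constant on the orbits of the inner action (a rack $1$-cocycle) and a bounded function. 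Writing $Q(X)$ for the space of rack quasimorphisms, one obtains
\[
  \ker\bigl(H_\bdd^2(X,\RR)\to H^2(X,\RR)\bigr)\;\cong\;Q(X)/\bigl(Z^1(X)+C_\bdd^1(X)\bigr),
\]
so it suffices to exhibit infinitely many rack quasimorphisms that are linearly independent modulo orbit-constant and bounded functions.

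To build them I would pass to the associated group $G=\bigast_{s\in S}\adg(X_s)$, the free product of the associated groups of the factors, and use the canonical comparison $X\to\conj(G)$ sending a rack element to a conjugate $w^{-1}e_x w$ of a generator. Under this model the inner operation becomes ordinary conjugation, $a\opr b=b^{-1}ab$, so that operating on the right by $b$ replaces the conjugator $w$ by $wb$. Since $\ord{S}\ge 2$ and each factor is a nontrivial finitely generated group, $G$ is a nonelementary free product and therefore carries infinitely many linearly independent homogeneous quasimorphisms that vanish identically on each free factor (for example Brooks quasimorphisms counting reduced words that alternate between the factors). For such a $\psi$ I set $\phi_\psi(a)=\psi(w)$, the value of $\psi$ on a conjugator of $a$.

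The heart of the matter is to check that $\phi_\psi$ is a well-defined rack quasimorphism. A conjugator is unique only up to left multiplication by the centralizer $C_G(e_x)$, which lies in a single free factor; because $\psi$ vanishes on that factor, changing the conjugator alters $\psi(w)$ by at most the defect of $\psi$, so $\phi_\psi$ is well defined modulo bounded functions. For the coboundary, $a\opr b$ has conjugator $wb$ with $b$ a conjugate of a generator, and homogeneity together with conjugation invariance gives $\psi(b)=\psi(\text{generator})=0$; hence $\ord{\phi_\psi(a)-\phi_\psi(a\opr b)}=\ord{\psi(w)-\psi(wb)}\le\ord{\psi(b)}+\operatorname{defect}(\psi)$ is uniformly bounded. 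The delicate point, and the step I expect to be the main obstacle, is the interaction between the rack operation and reduction: a priori $wb$ can cancel a long suffix of $w$ and expose a long centralizer prefix, so a naive counting function is not a quasimorphism. The vanishing-on-factors normalization is exactly what neutralizes both the centralizer prefix and the junction term. A second subtlety is that for general factors the comparison $X\to\conj(G)$ need not be injective; to prove the theorem as stated I would instead run the same counting construction intrinsically on the normal form of the free product $\bigast_{s\in S}X_s$, counting signed occurrences of patterns that straddle a boundary between two distinct factors, where the analogous cancellation estimate again yields a bounded coboundary.

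Finally I would establish linear independence. Evaluating the $\phi_{\psi}$ on rack elements whose conjugators are long reduced words realizing prescribed alternating patterns produces an essentially triangular system of values, so no nontrivial finite combination $\sum_i c_i\phi_{\psi_i}$ can be orbit-constant up to a bounded function unless $\sum_i c_i\psi_i$ is bounded, whence $0$ by homogeneity. This gives an infinite linearly independent family in $\ker\bigl(H_\bdd^2(X,\RR)\to H^2(X,\RR)\bigr)\subseteq H_\bdd^2(X,\RR)$, proving that $H_\bdd^2(X,\RR)$ is infinite-dimensional.
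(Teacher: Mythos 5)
Your overall architecture is sound, and your core construction coincides with the paper's: you pull a quasimorphism on the group $G=\bigast_{s\in S}\adg(X_s)$ back to $X$ via the normal form $(x,g)\mapsto g$ (your ``conjugator''), and your estimate $\ord{\psi(w)-\psi(wb)}\le\ord{\psi(b)}+D(\psi)$ with $\psi(b)=\psi(e_y)$ controlled is exactly the computation in the paper. The difference is the input: the paper feeds this machine with Rolli's non-homogeneous quasimorphisms built from families of odd bounded functions, whereas you feed it with homogeneous quasimorphisms vanishing on the free factors. The paper uses the homogeneous route only for the free rack (Corollary \ref{corollary:main}), and the reason is precisely where your gap lies.

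The genuine gap is your claim that, because each factor is ``a nontrivial finitely generated group'', $G$ is a non-elementary free product carrying infinitely many linearly independent homogeneous quasimorphisms that vanish on each factor. Nontriviality of the factors is not sufficient: $\ZZ/2\ast\ZZ/2$ is virtually $\ZZ$, amenable, and admits no nontrivial homogeneous quasimorphisms, and ``Brooks quasimorphisms counting alternating words'' is an allusion, not a proof, for arbitrary factor groups. The claim is true here, but only because of a fact you never invoke: every adjoint group surjects onto $\ZZ$ (send each $e_x\mapsto 1$), so each $\adg(X_s)$ is infinite, hence $G$ is a non-elementary free product and one can quote Bestvina--Fujiwara for relatively hyperbolic groups (or homogenize Rolli's quasimorphisms) to obtain the required infinite supply. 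Even repaired, this imports machinery the paper deliberately avoids: its proof needs only the infinite cyclic subgroup $\langle e_{x_0}\rangle$ of one factor and the visibly infinite-dimensional space of odd bounded functions on it. Two smaller points: your well-definedness discussion via centralizers is superfluous once you work with the reduced normal form $(x,g)$ (which you correctly identify as the fix for the non-injectivity of $X\to\conj(\adg(X))$ but do not carry out); and your linear-independence step via a ``triangular system'' should be replaced by the paper's cleaner mechanism: if $[\delta^1\hat{\phi}]_\bdd=0$ then $\hat{\phi}$ differs from a bounded function by a rack $1$-cocycle, i.e.\ a function constant on connected components, so it is bounded on each component, contradicting the linear growth of $\phi(g^n)$ along the reduced elements $(x,g^n)$.
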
\noindent
    Since the free rack on a set $S$ is the free product of trivial racks $\{s\}$ ($s \in S$), we immediately obtain the following corollary:
    \begin{corollary}\label{corollary:main}
        Let $X$ be the free rack of finite rank $\geq 2$. 
        Then $H_\bdd^2 (X, \RR)$ is infinite-dimensional.
    \end{corollary}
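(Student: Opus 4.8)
My plan is to deduce Corollary~\ref{corollary:main} directly from Theorem~\ref{theorem:main}. Write the given free rack as the free rack $X$ on a finite set $S$ with $2\le\ord{S}<\infty$. Then $X$ is the free product $\bigast_{s\in S}\{s\}$ of the one-element trivial racks $\{s\}$, each of which is finitely generated, so the hypotheses of Theorem~\ref{theorem:main} hold verbatim and $H_\bdd^2(X,\RR)$ is infinite-dimensional. I will record this as the primary proof.

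I would also like to give the self-contained argument promised in the abstract, which imports quasimorphisms from the associated group instead of invoking the free-product machinery. The guiding principle is the rack analogue of the group-theoretic bridge between quasimorphisms and the second bounded cohomology: if $f\colon X\to\RR$ has bounded rack coboundary $(df)(x,y)=f(x)-f(x\opr y)$, then $df$ is a bounded $2$-cocycle and the assignment $f\mapsto[df]$ descends to an injection \[ Q(X)\big/\bigl(Z^1(X)+C_\bdd^1(X)\bigr)\hookrightarrow H_\bdd^2(X,\RR), \] where $Q(X)$ is the space of rack quasimorphisms, $Z^1(X)$ the space of rack homomorphisms $X\to\RR$ (equivalently, the functions constant on rack orbits), and $C_\bdd^1(X)$ the bounded functions. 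It therefore suffices to produce infinitely many rack quasimorphisms on $X$ that are linearly independent modulo $Z^1(X)+C_\bdd^1(X)$.

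To build these I would pull back homogeneous quasimorphisms from the associated group of the free rack, which is the free group $F(S)$; here every element of $X$ is a conjugate $a^w=w^{-1}aw$ of a generator $a\in S$, and the rack operation is conjugation, $a^w\opr b^v=a^{\,wv^{-1}bv}$. Given a homogeneous quasimorphism $\phi\colon F(S)\to\RR$ I set $f_\phi(a^w)=\phi(w)$, using for $w$ the canonical representative of the coset $\langle a\rangle w$ (the reduced word not beginning with $a^{\pm1}$). Conjugation-invariance of $\phi$ and the defect inequality bound $\bigl|f_\phi(a^w)-f_\phi(a^w\opr b^v)\bigr|=\bigl|\phi(w)-\phi(wv^{-1}bv)\bigr|$ by a constant depending only on $\phi$ and $S$, so $f_\phi$ is a rack quasimorphism. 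The powers of $a$ that are absorbed when passing to the coset $\langle a\rangle w$ are kept under control exactly when $\phi(a)=0$ for every $a\in S$, a normalization met by the homogeneous Brooks quasimorphisms $\phi_W$ attached to reduced words $W$ that involve at least two distinct generators of $S$.

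The main obstacle will be the linear independence of the classes $[df_{\phi_W}]$. Because the rack orbits of the free rack are exactly the color classes $\{a^w:w\in F(S)\}$ indexed by $a\in S$, the space $Z^1(X)$ is only $\ord{S}$-dimensional, so a relation $\sum_i\lambda_i f_{\phi_{W_i}}\in Z^1(X)+C_\bdd^1(X)$ amounts to $\sum_i\lambda_i\phi_{W_i}$ being bounded on each family of canonical representatives. Since a nonzero homogeneous quasimorphism on $F(S)$ is unbounded, no single $f_{\phi_W}$ is trivial; the delicate point is to preclude cancellation among distinct Brooks words along these color-dependent representative sets. I expect this to reduce, by evaluating the $\phi_{W_i}$ on suitable test words, to the classical linear independence of Brooks quasimorphisms in $Q(F(S))/H^1(F(S);\RR)$, with the rank-two case the most subtle because each color then leaves only a single complementary generator from which to build test words.
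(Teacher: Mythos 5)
Your primary argument is exactly the paper's proof: the paper derives Corollary~\ref{corollary:main} in one line from Theorem~\ref{theorem:main} by observing that the free rack on $S$ is the free product of the trivial racks $\{s\}$, each finitely generated, so the corollary is correct and complete as you state it. Your supplementary sketch is essentially the paper's Section~\ref{section:homogeneous} (pulling back homogeneous quasimorphisms from $\adg(X)=F^G(S)$), but note two points: the description of elements as conjugates $w^{-1}aw$ with representatives chosen modulo $\langle a\rangle$ is the free \emph{quandle}, not the free rack, where elements are honest pairs $(s,g)$ and no coset normalization (hence no condition $\phi(a)=0$) is needed; and the linear-independence step you leave open is handled in the paper not by analyzing Brooks words directly but by showing $\eta\colon Q^\homog(\adg(X))\to H^2_\bdd(X,\RR)$ is injective (a bounded $\hat\phi$ forces $\phi(g)=\lim_n \hat\phi(x,g^n)/n=0$) and then quoting the infinite-dimensionality of $EH^2_\bdd$ for the non-elementary hyperbolic group $F^G(S)$, which sidesteps the cancellation issue you flag.
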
\noindent
    This is another proof of Corollary 5.7 in \cite{Saraf2025-wo}.

    These results hold also in case of quandles almost in parallel.
    Hereafter, we only mention racks and omit the phrase `and/or \dots quandle(s)' unless an additional explanation is needed.

    We will use quasimorphisms on racks to prove our results in Section \ref{section:free-product}.
    These are also introduced by K\k{e}dra in \cite{Kedra2024-jg}.
    Quasimorphisms on groups are used to prove infinite-dimensionality in the above studies.
    This approach can be applied similarly to the case of racks.
    We propose the method to construct a quasimorphism on the free product of racks using a quasimorphism on the free product of groups which was introduced by Rolli in \cite{Rolli2009-wy}.
    This construction provides an injective linear map from an infinite-dimensional vector space to the second bounded cohomology.

    We will give another proof of Corollary \ref{corollary:main} in Section \ref{section:homogeneous}.
    Similar argument also works when we use a homogeneous group quasimorphism instead of the quasimorphism of Rolli.
    We obtain the injective linear map from the space of homogeneous quasimorphism on the adjoint group of the free product $X$ of racks to the second bounded cohomology of $X$.
    The space of homogeneous quasimorphisms on $\Gamma$ is also infinite-dimensional if $\Gamma$ is the adjoint group of the free racks.
    Indeed, this follows from the facts that the adjoint group of the free rack is the free group, that the free group (of rank $\geq 2$) is non-elementary word-hyperbolic, and that the second bounded cohomology of any non-elementary word-hyperbolic group $\Gamma$ is infinite-dimensional.

    \subsubsection*{Acknowledgements}
    The author would like to thank Jarek K\k{e}dra, Morimichi Kawasaki and Mitsuaki Kimura for advice on this work, and the anonymous referees for helpful comments.

    \section{Preliminaries}\label{section:preliminaries}
    We start with recalling the definition and examples of racks.
    The free product of racks and quandles is our main object.
    Then we recall the (bounded) cohomology of racks and quandles which is introduced by K\k{e}dra in \cite{Kedra2024-jg}.
    For further details, we refer to \cite{Fenn1992-dd} for racks, to \cite{Nosaka2017-ms} for quandles including their cohomology, and to \cite{Calegari2009-yk,Frigerio2017-kr} for the basics of bounded cohomology and quasimorphisms of groups.

    As we mentioned in Introduction, we will give statements on racks and provide additional explanations for quandles only if needed.

    \subsection{Racks and quandles}
    A \emph{rack} is a set $X$ together with a binary operation $\opr \colon X \times X \to X$ satisfying the following axioms:
    \begin{enumerate}
        \item the \emph{rack identity}: $(x \opr y) \opr z = (x \opr z) \opr (y \opr z)$ for any $x, y, z \in X$, and
        \item a map $\psi_y \colon X \to X$ defined by $\psi_y (x) = x \opr y$ is bijective for any $y \in X$.
    \end{enumerate}
    A \emph{quandle} is a rack $(X, \opr)$ satisfying 
    \begin{enumerate}\setcounter{enumi}{2}
        \item $x \opr x = x$ for any $x \in X$. \label{axiom:idempotence}
    \end{enumerate}
    We write $\psi_y^n (x) = x \opr^n y$ for any $n \in \ZZ$. 
    
    A \emph{rack homomorphism} between racks is a map $f \colon X \to Y$ with $f (x \opr y) = f (x) \opr f (y)$.
    In the case of quandles, it is called a \emph{quandle homomorphism}.

    \begin{example}
        A set $X$ together with the operation $x \opr y = x$ is a rack, called the \emph{trivial rack}. 
        When we consider it as a quandle, we call it the \emph{trivial quandle}.
    \end{example}
    \begin{example}
        Let $G$ be a group.
        A set $G$ together with the operation $g \opr h = h^{-1} g h$ is a rack, called the \emph{conjugacy rack} of $G$.
        As a quandle, we call it the \emph{conjugacy quandle}.
    \end{example}
    
    The \emph{adjoint group} of $X$ is a group $\adg (X)$ presented as
    \begin{equation}
        \left\langle e_x \ (x \in X) \mid e_x e_y = e_y e_{x \opr y} \right\rangle.
    \end{equation}
    Since a rack homomorphism $f \colon X \to Y$ induces a group homomorphism $f_\sharp \colon \adg (X) \to \adg (Y)$, this gives rise to the functor $\adg (-)$ from the category of racks to the category of groups.
    This is left adjoint to the functor $\conj (-)$ which makes a group into the conjugacy rack.
    
    The adjoint group $\adg (X)$ acts on $X$ by $x \cdot e_y = x \opr y$.
    A \emph{connected component} of $X$ is an orbit of this action.

    \subsection{Free products}
    As in the case of groups, we have a free rack and a free product of racks. 
    Although they can be characterized as colimits in the category of racks by universal property (see \cite{Farinati2014-zt} or \cite{Fenn1992-dd}), here we give explicit constructions.
    \begin{example}[\cite{Farinati2014-zt, Fenn1992-dd}]\label{example:free_rack}
        Let $S$ be a set and $F^G (S)$ the free group on $S$.
        The \emph{free rack} on $S$ is a set $F^R (S) = S \times F^G (S)$ together with the operation
        \begin{equation}
            (s, g) \opr (t, h) = (s, g h^{-1} t h).
        \end{equation}
    \end{example}
    \begin{example}[\cite{Farinati2014-zt,Joyce1982-my}]
        The \emph{free quandle} on $S$ is the union of the conjugacy classes of elements of $S$ inside $F^G (S)$, $F^Q (S) = \bigcup_{s \in S} \{ g^{-1} s g \in F^G (S) \mid g \in F^G (S) \}$, with the operation $\alpha \opr \beta = \beta^{-1} \alpha \beta$.
    \end{example}
    \begin{example}[\cite{Fenn1992-dd}]
        Let $S$ be a set. 
        For a family of racks $X_s$ ($s \in S$), the \emph{free product} is a rack $\bigast_{s \in S} X_s$ consists of elements of the form $(x, g)$ where $x \in X_t$ for some $t \in S$ and $g \in \bigast_{s \in S} \adg (X_s)$ under the equivalence generated by 
        \begin{equation}\label{equation:equivalence-free-product}
            (x, g w) \sim (x \cdot g, w)
        \end{equation}
        where $x \in X_t$, $g \in \adg (X_t)$ for each $t \in S$, and $w \in \bigast_{s \in S} \adg (X_s)$.
        The operation is 
        \begin{equation}\label{equation:operation-free-product}
            (x, g) \opr (y, h) = (x, g h^{-1} e_y h).
        \end{equation}
        This is well-defined by the definition of the adjoint group.
        If $X_s$ are quandles, the free product is also a quandle. 
    \end{example}
    Since the adjoint group of a trivial rack of the one element set $\{\ast\}$ is $\ZZ$, the free rack $F^R (S)$ is the free product of the trivial racks $\{ s \}$ ($s \in S$).
    Since a left adjoint functor preserves colimits, the adjoint group of the free product $\bigast_{s \in S} X_s$ is the free product of the adjoint groups, $\bigast_{s \in S} \adg (X_s)$. 
    Especially, the adjoint group of the free rack $F^R (S)$ is the free group $F^G (S)$.

    \begin{remark}
        The quotient $F^R (S) / {\sim}$ under the equivalence generated by $(s, g) \sim (s, s g)$ for any $s \in S$ and $g \in F^G (S)$ is a quandle.
        This quotient can be identified with the free quandle $F^Q (S)$ defined above by associating a conjugation $g^{-1} s g$ with the class represented by $(s, g)$. 
        The above equivalence is descended from the equivalence \eqref{equation:equivalence-free-product}.
        Thus, we can say that the free quandle on $S$ is the free product of the trivial quandles $\{s\}$ as in the case of racks.
    \end{remark}

    A rack $X$ is \emph{finitely generated} if there is a finite subset $S \subset X$ such that, for any $x \in X$, there are an integer $n \geq 0$, elements $s_0, s_1, \dots, s_n \in S$, and $\varepsilon_1, \dots, \varepsilon_n \in \{ \pm 1 \}$ such that 
    \begin{equation}
        x = s_0 \opr^{\varepsilon_1} s_1 \opr \cdots \opr^{\varepsilon_n} s_n.
    \end{equation}
    By definition, the adjoint group $\adg (X)$ is generated by $\{ e_x \mid x \in X \}$.
    If a rack $X$ is finitely generated by $S$, each generator $e_x$ can be written in the form 
    \begin{equation}
        e_x = e_{s_n}^{- \varepsilon_n} \cdots e_{s_1}^{- \varepsilon_1} \cdot e_{s_0} \cdot e_{s_1}^{\varepsilon_1} \cdots e_{s_n}^{\varepsilon_n}
    \end{equation}
    for some $s_0, s_1, \dots, s_n \in S$ and $\varepsilon_1, \dots, \varepsilon_n \in \{ \pm 1 \}$.
    Thus, we obtain the following:
    \begin{lemma}
        If a rack $X$ is finitely generated by $S$, then the adjoint group $\adg (X)$ is finitely generated by $\{ e_s \mid s \in S \}$.
    \end{lemma}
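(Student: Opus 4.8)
The plan is to show that every defining generator $e_x$ ($x \in X$) of $\adg(X)$ already lies in the subgroup generated by the finite set $\{ e_s \mid s \in S \}$. Since the presentation tells us that $\adg(X)$ is generated by all the $e_x$, this will immediately give the claim. The heart of the matter is to rewrite $e_{x \opr^{\varepsilon} y}$ in terms of $e_x$ and $e_y^{\pm 1}$ using the defining relations.

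First I would read off from the presentation relation $e_x e_y = e_y e_{x \opr y}$ the conjugation formula
\[
e_{x \opr y} = e_y^{-1} e_x e_y,
\]
valid for all $x, y \in X$. To handle negative exponents, I would invoke the axiom that each $\psi_y$ is a bijection: this makes $x \opr^{-1} y$ well-defined and yields $(x \opr^{-1} y) \opr y = x$. Substituting $x \opr^{-1} y$ for $x$ in the formula above gives $e_x = e_y^{-1} e_{x \opr^{-1} y} e_y$, hence
\[
e_{x \opr^{-1} y} = e_y e_x e_y^{-1}.
\]
The two cases combine into the uniform identity $e_{x \opr^{\varepsilon} y} = e_y^{-\varepsilon} e_x e_y^{\varepsilon}$ for $\varepsilon \in \{ \pm 1 \}$.

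With this in hand I would argue by induction on the length $n$ of an expression $x = s_0 \opr^{\varepsilon_1} s_1 \opr \cdots \opr^{\varepsilon_n} s_n$ with $s_i \in S$, which finite generation by $S$ provides. Setting $x_0 = s_0$ and $x_k = x_{k-1} \opr^{\varepsilon_k} s_k$, the uniform identity gives $e_{x_k} = e_{s_k}^{-\varepsilon_k} e_{x_{k-1}} e_{s_k}^{\varepsilon_k}$, so by induction
\[
e_x = e_{s_n}^{-\varepsilon_n} \cdots e_{s_1}^{-\varepsilon_1} \cdot e_{s_0} \cdot e_{s_1}^{\varepsilon_1} \cdots e_{s_n}^{\varepsilon_n},
\]
which is exactly the expression displayed just before the lemma. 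In particular $e_x$ belongs to the subgroup generated by $\{ e_s \mid s \in S \}$.

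Since every generator $e_x$ of $\adg(X)$ lies in this subgroup, the subgroup is all of $\adg(X)$, and therefore $\adg(X)$ is finitely generated by the finite set $\{ e_s \mid s \in S \}$. The only step demanding any care is the negative-exponent case, where the bijectivity axiom is essential in order to make sense of $x \opr^{-1} y$ and to justify the conjugation formula for $\varepsilon = -1$; the rest is a routine induction that the displayed formula preceding the lemma has effectively already performed, so I do not expect a genuine obstacle here.
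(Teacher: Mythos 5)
Your proposal is correct and follows exactly the route the paper takes: it derives the conjugation identity $e_{x \opr^{\varepsilon} y} = e_y^{-\varepsilon} e_x e_y^{\varepsilon}$ from the defining relation of $\adg(X)$ and then inducts to obtain the displayed expression for $e_x$ that the paper states immediately before the lemma. You have merely supplied the details (in particular the negative-exponent case) that the paper leaves implicit.
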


    \subsection{Bounded cohomology}
    The (bounded) cohomology group and of racks is defined similarly to that of groups.
    Since we will consider bounded cohomology and quasimorphisms, we restrict our attention to the trivial real coefficient $\RR$.
    
    For a rack $X$ and a non-negative integer $n$, let $C^n (X; \RR)$ be the set of functions $X^n \to \RR$.
    For $n < 0$, let $C^n (X; \RR) = 0$.
    Here we understand $X^0$ to be a one element set. 
    The coboundary operator $\delta^n \colon C^n (X; \RR) \to C^{n+1} (X; \RR)$ is defined by 
    \begin{equation}
        \begin{aligned}
            \delta^n f (x_1, \dots, x_n, x_{n+1}) &= \sum_{i=1}^{n+1} (-1)^i \left[
                f (x_1, \dots, x_{i-1}, x_{i+1}, \dots, x_{n+1}) \right. \\
            & \qquad \left.
                - f (x_1 \opr x_i, \dots, x_{i-1} \opr x_i, x_{i+1}, \dots, x_{n+1})
            \right]. 
        \end{aligned}
    \end{equation}
    In case of $n \leq 0$, we define $\delta^n = 0$.
    Thus, we obtain a cochain complex $C^\ast (X; \RR) = (C^n (X; \RR), \delta^n)$. 
    The \emph{rack cohomology} is the cohomology of this complex
    \begin{equation}
        H^n (X; \RR) = \ker \delta^n / \img \delta^{n-1}.
    \end{equation}

    \begin{example}
        We demonstrate some computations.
        \begin{equation}
            \begin{aligned}
                \delta^1 f (x, y) &= f (x) - f (x \opr y) \\
                \delta^2 f (x, y, z) &= f (x, z) - f (x, y) - f (x \opr y, z) + f (x \opr z, y \opr z) 
            \end{aligned}    
        \end{equation}
        Notice that $\delta^1 f = 0$ if and only if $f$ is constant on each connected component.
        Moreover, if $\RR$ is equipped with the structure of a trivial rack, then this is also equivalent to that $f$ is a rack homomorphism.
    \end{example}

    The bounded cohomology of a rack is defined similarly to that of a group.
    For a function $f \colon Z \to \RR$, we write $\norm{f}_\infty = \sup \set{\ord{f (z)} \mid z \in Z}$.
    For a rack $X$, let $C^n_\bdd (X; \RR)$ be the submodule of functions $X^n \to \RR$ which is bounded with respect to $\norm{\cdot}_\infty$.
    The coboundary operators $\delta^n$ may be restricted to $C^n_\bdd (X; \RR)$, and $C^\ast_\bdd (X; \RR) = (C^n_\bdd (X; \RR), \delta^n)$ forms a cochain complex.
    The \emph{bounded rack cohomology} $H^n_\bdd (X; \RR)$ is the cohomology of this complex.
    The inclusion $C^n_\bdd (X; \RR) \hookrightarrow C^n (X; \RR)$ induces the map $c^n \colon H^n_\bdd (X; \RR) \to H^n (X; \RR)$ called the \emph{comparison map}.

    If $X$ is a quandle, the \emph{quandle cochain complex} of $X$ is the quotient of the rack cochain complex of $X$ by the subcomplex $D^\ast (X; \RR)$ defined by 
    \begin{equation}
        D^n (X; \RR) = \set{
            f \in C^n (X; \RR) \mid f (x) = 0 \text{ for each } x \text{ such that } x_i = x_{i+1} \text{ for some } i
        }
    \end{equation}
    for $n \geq 2$ and $D^n (X; \RR) = 0$ for $n \leq 1$.
    The \emph{quandle cohomology} of $X$ is a cohomology of this complex and 
    the \emph{bounded quandle cohomology} is the bounded cohomology in this sense.

    \section{Rack quasimorphisms on the free product}\label{section:free-product}
    In this section, we present the method to construct quasimorphisms on the free product of racks using quasimorphisms on the free product of groups introduced by Rolli \cite{Rolli2009-wy}, and then we prove our main theorem using such rack quasimorphisms.
    
    \subsection{Group quasimorphisms}
    We first recall Rolli's construction and give some technical remarks on the elements in the free product of racks. 
    We will use those observations in our argument.

    A \emph{group quasimorphism} on a group $\Gamma$ is a function $\phi \colon \Gamma \to \RR$ satisfying 
    \begin{equation}
        D (\phi) = \sup_{g, h \in \Gamma} \ord{\phi (g) + \phi (h) - \phi (g h)} < \infty.
    \end{equation}
    The constant $D (\phi)$ is called the \emph{defect} of $\phi$. 
    In terms of group cohomology, $\phi$ is a quasimorphism if and only if the cocycle $\delta^1 \phi$ is bounded.

    Let $\Gamma = \bigast_{s \in S} \Gamma_s$ be the free product of (non-trivial) groups with $2 \leq \ord{S}$. 
    Each element $1 \neq g \in \Gamma$ is uniquely written in the product of $g_1, \dots, g_n$ with $1 \neq g_i \in \Gamma_{s_i}$ and $s_i \not= s_{i+1}$ for $1 \leq i < n$.
    Such product $g = g_1 \cdots g_n$ is called the \emph{factorization} of $g$, and we will call each $g_i$ the \emph{syllable} in the factorization. 

    Let $\lambda = (\lambda_s)_{s \in S}$ be a uniformly bounded family of odd bounded functions, that is, each function $\lambda_s \colon \Gamma_s \to \RR$ is bounded with respect to $\norm{\cdot}_\infty$ and satisfies $\lambda_s (g^{-1}) = - \lambda_s (g)$ for any $g \in \Gamma_s$, and $\sup \{ \norm{\lambda_s}_\infty \mid s \in S \} < \infty$.
    \begin{proposition}[\cite{Rolli2009-wy}, Proposition 4.1]\label{proposition:rolli-4-1}
        Let $\Gamma$ and $\lambda$ be as above. 
        Then a map $\phi_\lambda \colon \Gamma \to \RR$ defined by 
        \begin{equation}
            \phi_\lambda (g) = \sum_{i = 1}^{n} \lambda_{s_i} (g_i),
        \end{equation} 
        where $g = g_1 \cdots g_n$ is the factorization, is a group quasimorphism.
    \end{proposition}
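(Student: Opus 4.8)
The plan is to estimate the defect $D(\phi_\lambda) = \sup_{g, h} \ord{\phi_\lambda (g) + \phi_\lambda (h) - \phi_\lambda (gh)}$ directly, by tracking how the factorization of the product $gh$ is assembled from the factorizations of $g$ and $h$. Set $C = \sup_s \norm{\lambda_s}_\infty < \infty$ and fix nontrivial elements with factorizations $g = g_1 \cdots g_m$ and $h = h_1 \cdots h_n$; the cases $g = 1$ or $h = 1$ give a vanishing defect term, since $\phi_\lambda (1) = 0$ as the empty sum (note $\lambda_s (1) = 0$ by oddness). The guiding principle is that $\phi_\lambda$ is additive except for a bounded discrepancy localized at the junction where the trailing syllables of $g$ meet the leading syllables of $h$.

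First I would introduce the cancellation length: let $k \geq 0$ be the largest integer with $g_{m - i + 1} h_i = 1$ for all $1 \leq i \leq k$, noting that each such cancellation forces $g_{m-i+1}$ and $h_i$ to lie in a common free factor. The decisive observation is that oddness makes every canceled pair irrelevant to the defect term: from $g_{m-i+1} = h_i^{-1}$ we get $\lambda (g_{m-i+1}) + \lambda (h_i) = 0$, so these syllables contribute nothing to $\phi_\lambda (g) + \phi_\lambda (h)$, and since they are absent from $gh$ they contribute nothing to $\phi_\lambda (gh)$ either. Thus we may discard them and analyze only the surviving words $g_1 \cdots g_{m-k}$ and $h_{k+1} \cdots h_n$.

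Next I would examine the new junction between $g_{m-k}$ and $h_{k+1}$ (when both survive, i.e. $k < m$ and $k < n$). By maximality of $k$ there are exactly two possibilities. Either the two syllables lie in distinct free factors, so that $g_1 \cdots g_{m-k} h_{k+1} \cdots h_n$ is already reduced and, after the canceled pairs are accounted for, $\phi_\lambda (gh) = \phi_\lambda (g) + \phi_\lambda (h)$, giving a defect term of $0$; or they lie in a common factor $\Gamma_s$ with $g_{m-k} h_{k+1} \neq 1$, whence they merge into the single syllable $g_{m-k} h_{k+1}$ and the reduced form is $g_1 \cdots g_{m-k-1} (g_{m-k} h_{k+1}) h_{k+2} \cdots h_n$. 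In this merging case the defect term collapses to $\lambda_s (g_{m-k}) + \lambda_s (h_{k+1}) - \lambda_s (g_{m-k} h_{k+1})$, bounded in absolute value by $3C$. The boundary cases $k = m$ or $k = n$, where one word is entirely swallowed, leave a reduced form with no merge and hence a vanishing defect term.

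Assembling these cases gives $D (\phi_\lambda) \leq 3C < \infty$, so $\phi_\lambda$ is a quasimorphism. The step I expect to demand the most care is the word-level bookkeeping of $gh$: confirming that after the $k$ cancellations and at most one merge the resulting word is genuinely reduced (no two adjacent surviving syllables share a factor, using $s_{m-k-1} \neq s_{m-k}$ and $t_{k+1} \neq t_{k+2}$), and checking that the absorbed-word boundary cases contribute nothing unbounded. The argument is elementary, but this is precisely where a sign error or a miscounted junction could slip in.
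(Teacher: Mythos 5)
Your proof is correct: the cancellation-length bookkeeping, the use of oddness to make each cancelled pair contribute zero, and the single merged junction bounded by $3C$ together give $D(\phi_\lambda)\leq 3\sup_s\norm{\lambda_s}_\infty$, and your verification that the resulting word is reduced is the right place to be careful. The paper itself imports this statement from Rolli without reproving it, and your argument is essentially Rolli's original one — the same style of case analysis on cancellations that the paper later carries out by hand in its proof of Proposition \ref{proposition:rack-quasimorphism}.
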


    \subsection{Rack quasimorphisms}
    A \emph{rack quasimorphism} on a rack $X$ is a function $\phi \colon X \to \RR$ satisfying
    \begin{equation}
        \sup_{x, y \in X} \ord{\phi (x) - \phi (x \opr y)} < \infty.
    \end{equation}
    As in the case of group quasimorphisms, $\phi$ is a rack quasimorphism if and only if $\delta^1 \phi$ is bounded by the definition of the rack coboundary map.
    The following is the key observation that rack quasimorphisms give rise to bounded rack cohomology classes. 
    \begin{lemma}[5.2 in \cite{Kedra2024-jg}]
        If $\phi \colon X \to \RR$ is a rack quasimorphism, then $\delta^1 \phi \in C^2_\bdd (X; \RR)$ is a cocycle, and $[\delta^1 \phi] \in \ker c^2$.
        Moreover, if $\phi$ is unbounded on a connected component of $X$, then the cocycle $\delta^1 \phi$ is non-trivial.
    \end{lemma}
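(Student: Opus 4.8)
The plan is to dispatch the three assertions in turn; the first two are formal consequences of the cochain structure and of the way the comparison map is defined, while the third rests entirely on the characterization of $\ker \delta^1$ recalled in the Example above.

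First I would observe that $\delta^1 \phi$ is bounded: this is exactly the restatement of the defining inequality of a rack quasimorphism, namely $\norm{\delta^1 \phi}_\infty = \sup_{x,y}\ord{\phi(x) - \phi(x \opr y)} < \infty$, as noted immediately before the statement. That $\delta^1 \phi$ is a cocycle is then immediate, since $\delta^2 \delta^1 = 0$ in the cochain complex $C^\ast(X; \RR)$; because the differential restricts to the bounded subcomplex, $\delta^1 \phi$ is a bounded $2$-cocycle and so represents a class in $H^2_\bdd(X; \RR)$.

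Next, for $[\delta^1 \phi] \in \ker c^2$, I would use that the comparison map $c^2$ is induced by the inclusion $C^\ast_\bdd(X;\RR) \hookrightarrow C^\ast(X;\RR)$, hence sends the bounded class of $\delta^1 \phi$ to the \emph{ordinary} cohomology class of the same cocycle in $H^2(X;\RR)$. But $\delta^1 \phi$ is, by construction, the ordinary coboundary of $\phi$, regarded here as a (possibly unbounded) element of $C^1(X;\RR)$; thus its class in $H^2(X;\RR)$ is zero, giving $c^2[\delta^1\phi] = 0$.

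Finally, for the non-triviality under the unboundedness hypothesis, I would argue by contraposition. Suppose $\delta^1 \phi$ were a bounded coboundary, say $\delta^1 \phi = \delta^1 \psi$ with $\psi \in C^1_\bdd(X;\RR)$. Then $\delta^1(\phi - \psi) = 0$, so by the Example the function $\phi - \psi$ is constant on each connected component of $X$. Since $\psi$ is bounded, $\phi = (\phi - \psi) + \psi$ is bounded on each connected component, contradicting the hypothesis that $\phi$ is unbounded on some component; hence $[\delta^1\phi] \neq 0$ in $H^2_\bdd(X;\RR)$. The lemma is thus largely formal: the only genuine content lies in this last step, where the identification of $\ker \delta^1$ with the functions that are constant on connected components is the essential input, the earlier parts being just properties of the comparison map and of the differential.
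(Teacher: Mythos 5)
Your proof is correct, and it is essentially the argument the paper relies on: the paper itself cites this lemma from K\k{e}dra without proof, but your key step for non-triviality (writing $\phi-\psi$ as an element of $\ker\delta^1$, hence constant on connected components, hence $\phi$ bounded there) is exactly the reasoning reproduced in the paper's proof of Proposition \ref{proposition:injective-linear}. The first two assertions are, as you say, formal consequences of $\delta^2\delta^1=0$ and of the definition of the comparison map, and your treatment of them is fine.
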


    Let $X = \bigast_{s \in S} X_s$ be the free product of racks $X_s$. 
    Observe that for each $t \in S$, if $x \in X_t$, $g \in \bigast_{s \in S} \adg (X_s)$ with the factorization $g = g_0 \cdot g_1 \cdots g_n $, and $g_0 \in \adg (X_t)$, then the element $(x, g)$ in $X$ can be written in the form 
    \begin{equation}
        (x \cdot g_0, g_1 \cdots g_n).
    \end{equation}
    Therefore, we can assume that each element in the free product $X$ has the form 
    \begin{equation}
        (x, g_1 \cdots g_n) \label{equation:reduced}
    \end{equation}
    where $x \in X_t$ and $g_1 \in \adg (X_{s_1})$ with $s_1 \not= t$.
    \begin{definition}
        We say that an element in a free product is \emph{reduced} if it has the form \eqref{equation:reduced}.
    \end{definition}

    Using a quasimorphism on the free product of the adjoint groups, we can construct a quasimorphism on the free product of racks as follows.
    \begin{proposition}\label{proposition:rack-quasimorphism}
        Let $X = \bigast_{s \in S} X_s$ be the free product of racks with $2 \leq \ord{S}$, $\lambda = (\lambda_s)_{s \in S}$ be a uniformly bounded family of odd bounded functions $\lambda_s \colon \adg (X_s) \to \RR$, and $\phi_\lambda$ be the group quasimorphism constructed in Proposition \ref{proposition:rolli-4-1}.
        Then a map $\hat{\phi}_\lambda \colon X \to \RR$ defined by 
        \begin{equation}
            \hat{\phi}_\lambda (x, g) = \phi_\lambda (g),
        \end{equation} 
        where $(x, g)$ is reduced, is a rack quasimorphism.
        Moreover, if $\lambda \neq 0$ then $\hat{\phi}_\lambda$ is unbounded.
    \end{proposition}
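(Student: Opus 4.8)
The plan is to reduce everything to the defect $D(\phi_\lambda)$ of the group quasimorphism together with the uniform bound $C := \sup_{s \in S} \norm{\lambda_s}_\infty < \infty$. First I would record a bookkeeping estimate: if $(x, w)$ is any, not necessarily reduced, representative with $x \in X_t$, then passing to reduced form absorbs into $x$ at most the single leading syllable of $w$ lying in $\adg(X_t)$ (the next syllable in the factorization lies in a different free factor), so $\hat{\phi}_\lambda(x, w)$ equals $\phi_\lambda(w)$ with one $\lambda_t$-value removed, giving $\ord{\hat{\phi}_\lambda(x, w) - \phi_\lambda(w)} \leq C$. Now fix reduced elements $(x, g)$ and $(y, h)$ with $x \in X_t$ and $y \in X_u$; the operation yields $(x, g) \opr (y, h) = (x, g h^{-1} e_y h)$, so by this estimate it suffices to bound $\ord{\phi_\lambda(g) - \phi_\lambda(g h^{-1} e_y h)}$.

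The heart of the argument is the behaviour of $\phi_\lambda$ on the conjugate $h^{-1} e_y h$. Writing the factorization $h = h_1 \cdots h_m$ with $h_i \in \adg(X_{v_i})$ and $h_1 \notin \adg(X_u)$ (since $(y,h)$ is reduced), the word
\[
h^{-1} e_y h = h_m^{-1} \cdots h_1^{-1} \cdot e_y \cdot h_1 \cdots h_m
\]
is itself reduced in the free product of groups, because $e_y \in \adg(X_u)$ together with $v_1 \neq u$ prevents any adjacent syllables from collapsing. Evaluating $\phi_\lambda$ on this factorization and using that each $\lambda_{v_i}$ is odd, the contributions of $h_i^{-1}$ and $h_i$ cancel in pairs, leaving $\phi_\lambda(h^{-1} e_y h) = \lambda_u(e_y)$, whence $\ord{\phi_\lambda(h^{-1} e_y h)} \leq C$. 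Combining this with the quasimorphism inequality $\ord{\phi_\lambda(g \cdot v) - \phi_\lambda(g) - \phi_\lambda(v)} \leq D(\phi_\lambda)$ applied to $v = h^{-1} e_y h$, I get $\ord{\phi_\lambda(g) - \phi_\lambda(g h^{-1} e_y h)} \leq C + D(\phi_\lambda)$, and therefore the total defect of $\hat{\phi}_\lambda$ is at most $2C + D(\phi_\lambda) < \infty$.

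For unboundedness, suppose $\lambda \neq 0$ and choose $s \in S$ and $a \in \adg(X_s)$ with $\lambda_s(a) = \alpha \neq 0$; note $a \neq 1$, since oddness forces $\lambda_s(1) = 0$. As $\ord{S} \geq 2$, fix an index $s' \neq s$, a nontrivial $b \in \adg(X_{s'})$, and any element $x$ in a factor $X_t$ with $t \neq s$. Then both $(x, (ab)^k)$ and $(x, (ab^{-1})^k)$ are reduced, because their leading syllable $a$ lies outside $\adg(X_t)$. Reading off the factorizations gives $\hat{\phi}_\lambda(x, (ab)^k) = k(\alpha + \lambda_{s'}(b))$ and $\hat{\phi}_\lambda(x, (ab^{-1})^k) = k(\alpha + \lambda_{s'}(b^{-1}))$; adding them and invoking oddness, the $b$-terms cancel and the sum equals $2k\alpha$. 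Hence at least one of the two values has absolute value $\geq k\ord{\alpha}$, which tends to infinity, so $\hat{\phi}_\lambda$ is unbounded.

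The step I expect to be most delicate is the interaction between the algebraic reduction in the rack free product and the syllable-by-syllable definition of $\phi_\lambda$: I must make sure that forming $g h^{-1} e_y h$ and then reducing alters the value of $\phi_\lambda$ by only a bounded amount. The oddness hypothesis on the $\lambda_s$ is precisely what forces the conjugating syllables $h_1, \dots, h_m$ to cancel, so that the a priori unbounded conjugate $h^{-1} e_y h$ contributes only the bounded term $\lambda_u(e_y)$; without oddness this cancellation fails and $\hat{\phi}_\lambda$ need not be a quasimorphism.
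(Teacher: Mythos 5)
Your proof is correct, and it takes a genuinely different route from the paper. The paper bounds $\ord{\phi_\lambda(g)-\phi_\lambda(gh^{-1}e_yh)}$ by an explicit case-by-case analysis of how the syllables of $g$ cancel against those of $h^{-1}e_yh$ (three cases according to whether the number $r$ of cancelled pairs is less than, equal to, or greater than $n$, each with two subcases for concatenation), in effect re-deriving by hand the instance of Rolli's quasimorphism inequality that it needs; the resulting bound is $4\norm{\lambda}_\infty$. You instead treat $D(\phi_\lambda)<\infty$ from Proposition \ref{proposition:rolli-4-1} as a black box and isolate the one computation that actually matters: since $(y,h)$ is reduced, the word $h_m^{-1}\cdots h_1^{-1}e_yh_1\cdots h_m$ is already a factorization (no adjacent syllables collapse, and $e_y\neq 1$ because $\adg(X)$ surjects onto $\ZZ$), so oddness gives $\phi_\lambda(h^{-1}e_yh)=\lambda_u(e_y)$, and the quasimorphism inequality finishes the estimate with bound $2C+D(\phi_\lambda)$. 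This is shorter and less error-prone than the casework, at the cost of a marginally worse constant (immaterial here). You are also more careful than the paper on one point: the paper silently identifies $\hat{\phi}_\lambda(x,gh^{-1}e_yh)$ with $\phi_\lambda(gh^{-1}e_yh)$, which can fail when all of $g$ cancels and the surviving leading syllable lies in $\adg(X_t)$; your preliminary observation that reduction absorbs at most one leading syllable, costing at most $C$, patches this cleanly. Your unboundedness argument (playing $b$ against $b^{-1}$ and using oddness to guarantee one of the two geometric sequences is nonzero) is the same trick as the paper's, which uses $e_x^{\pm 1}$.
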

    \begin{proof}
        Assume that $(x, g), (y, h) \in X$ are reduced with the factorizations $g = g_1 \cdots g_m$ ($g_i \in \adg (X_{s_i})$) and $h = h_1 \cdots h_n$ ($h_j \in \adg (X_{t_j})$), and that $y \in X_{t_0}$ for certain $t_0$.

        By construction, we have 
        \begin{equation}
            \hat{\phi}_\lambda (x, g) - \hat{\phi}_\lambda (x, g h^{-1} e_y h)
            = 
            \phi_\lambda (g) - \phi_\lambda (g h^{-1} e_y h).
        \end{equation}
        Using the factorizations of $g$ and $h$, 
        \begin{equation}
            g h^{-1} e_y h
            = 
            (g_1 \cdots g_m) \cdot (h_n^{-1} \cdots h_1^{-1}) \cdot e_y \cdot (h_1 \cdots h_n)
        \end{equation}
        and this may not be the factorization of $g h^{-1} e_y h$.
        Notice that cancellation or concatenation do not occur in $h_1^{-1} e_y h_1$ since $t_0 \not= t_1$ by the assumption that $(y, h)$ is reduced.
        Let $r$ be the number of pairs of syllables which are cancelled between $g$ and $h^{-1} e_y h$.
        Cancellation will be the following cases.

        \noindent
        Case 1: 
        $1 \leq r < n$, that is, $g_m \cdot h_n^{-1} = 1, \dots, g_{m - r + 1} \cdot h_{n - r + 1}^{-1} = 1$.

        \noindent
        (i) If $s_{m - r} \neq t_{n - r}$, then no more cancellation occur, and the factorization is of the form
        \begin{equation}
            g_1 \cdots g_{m - r} \cdot h_{n - r}^{-1} \cdots h_1^{-1} \cdot e_y \cdot h_1 \cdots h_n.
        \end{equation}

        \noindent
        (ii) If $s_{m - r} = t_{n - r}$, let $u_r := s_{m - r} = t_{n - r}$ and $k_r := g_{m - r} \cdot h_{n - r}^{-1} \in \adg (X_{u_r})$.
        Then the factorization is of the form
        \begin{equation}
            g_1 \cdots g_{m - r - 1} \cdot k_r \cdot h_{n - r - 1}^{-1} \cdots h_1^{-1} \cdot e_y \cdot h_1 \cdots h_n,
        \end{equation}
        since $u_r \neq s_{m - r - 1}, t_{n - r - 1}$.
        
        \noindent
        Case 2: 
        $r = n$, that is, $g_m \cdot h_n^{-1} = 1, \dots, g_{m - n + 1} \cdot h_1^{-1} = 1$.

        \noindent
        (i) If $s_{m - n} \neq t_0$, then 
        \begin{equation}
            g_1 \cdots g_{m - n} \cdot e_y \cdot h_1 \cdots h_n.
        \end{equation}

        \noindent
        (ii) If $s_{m - n} = t_0$, let $u_n := s_{m - n} = t_0$ and $k_n := g_{m - n} \cdot e_y$. 
        Then 
        \begin{equation}
            g_1 \cdots g_{m - n - 1} \cdot k_n \cdot h_1 \cdots h_n.
        \end{equation}

        \noindent
        Case 3:
        $r > n$, that is, $g_m \cdot h_n^{-1} = 1, \dots, g_{m - n + 1} \cdot h_1^{-1}$ = 1, $g_{m - n} \cdot e_y = 1$, $g_{m - n + 1} \cdot h_1 = 1, \dots, g_{m - r + 1} \cdot h_{r - n - 1} = 1$.

        \noindent
        (i) If $s_{m - r} \neq t_{r - n}$, then 
        \begin{equation}
            g_1 \cdots g_{m - r} \cdot h_{r - n} \cdots h_n.
        \end{equation}

        \noindent
        (ii) If $s_{m - r} = t_{r - n}$, let $u_r := s_{m - r} = t_{r - n}$ and $k_r := g_{m - r} \cdot h_{r - n}$. 
        Then 
        \begin{equation}
            g_1 \cdots g_{m - r - 1} \cdot k_r \cdot h_{r - n + 1} \cdots h_n.
        \end{equation}

        Case 1-(ii) is most critical for our estimation. 
        Since $\lambda_s$'s are odd functions, $\lambda_{s_m} (g_m) = \lambda_{t_n} (h_n)$, $\dots$, $\lambda_{s_{m - r + 1}} (g_{m - r + 1}) = \lambda_{t_{n - r + 1}} (h_{n - r + 1})$ in this case, and we have 
        \begin{equation}
            \begin{aligned}
                &\ord{\hat{\phi}_\lambda (x, g) - \hat{\phi}_\lambda (x, g h^{-1} e_y h)} \\
                \leq \,
                &\left\lvert
                    \sum_{i = 1}^m \lambda_{s_i} (g_i) -
                    \left(
                        \sum_{i = 1}^{m - r - 1} \lambda_{s_i} (g_i)
                        + \lambda_{u_r} (k_r)
                        - \sum_{i = 1}^{n - r - 1} \lambda_{t_i} (h_i)
                        + \lambda_{t_0} (e_y)
                        + \sum_{i = 1}^n \lambda_{t_i} (h_i)
                    \right)
                \right\rvert \\
                \leq \,
                &\ord{\lambda_{s_{m-r}} (g_{m-r})} + \ord{\lambda_u (k)} + \ord{\lambda_{t_0} (e_y)} + \ord{\lambda_{t_{n-r}} (h_{n-r})}.
            \end{aligned}
        \end{equation}
        Thus, in general, 
        \begin{equation}
            \begin{aligned}
                \ord{\hat{\phi}_\lambda (x, g) - \hat{\phi}_\lambda (x, g h^{-1} e_y h)}
                \leq 
                4 \cdot \norm{\lambda}_\infty 
                <
                \infty.
            \end{aligned}
        \end{equation}
        Therefore, $\hat{\phi}_\lambda$ is a rack quasimorphism.

        Next, assume $\lambda \neq 0$. 
        There is $s_0 \in S$ and $g_0 \in \adg (X_{s_0})$ such that $\lambda_{s_0} \neq 0$ and $\lambda_{s_0} (g_0) \neq 0$.
        Take $t \in S$ and $x \in \adg (X_t)$ for $t \neq s_0$.
        Since $\lambda_t$ is odd, at least one of $\lambda_{s_0} (g_0) + \lambda_t (e_x^\pm) \neq 0$, and we may assume $\lambda_{s_0} (g_0) + \lambda_t (e_x) \neq 0$ without loss of generality. 
        For any $n > 0$, $(x, (g_0 e_x)^n)$ is reduced.
        Thus, we have 
        \begin{equation}
            \ord{\hat{\phi}_\lambda (x, (g_0 e_x)^n)}
            = \ord{\phi_\lambda ((g_0 e_x)^n)}
            = n \cdot \ord{\lambda_{s_0} (g_0) + \lambda_t (e_x)}
            > 0.
        \end{equation}
        Therefore, $\hat{\phi}_\lambda$ is unbounded.
    \end{proof}
    
    \subsection{Proof of Theorem \ref{theorem:main}}
    To prove Theorem \ref{theorem:main}, we will show the existence of an injective linear map from an infinite-dimensional vector space.
    Quasimorphisms constructed above provide an injective linear map from the space of uniformly bounded families of odd bounded functions to the second bounded cohomology.
    For the free product of nontrivial groups $\Gamma = \bigast_{s \in S} \Gamma_s$, we write the space of uniformly bounded families of odd bounded functions by $V_0 (\Gamma)$:
    \begin{equation}
        V_0 (\Gamma) = \set{ \lambda = (\lambda_s)_{s \in S} \mid \text{each } \lambda_s \colon \Gamma_s \to \RR \text{ is odd and bounded, and } \sup_{s \in S} \norm{ \lambda_s }_\infty < \infty }.
    \end{equation}

    \begin{proposition}\label{proposition:injective-linear}
        Let $X = \bigast_{s \in S} X_s$ be the free product of finitely generated racks with $2 \leq \ord{S} < \infty$.
        Then a map $\hat{\phi} \colon V_0 (\adg (X)) \to H_\bdd^2 (X, \RR)$ defined by 
        \begin{equation}
            \hat{\phi} (\lambda) = [ \delta^1 \hat{\phi}_\lambda ]_\bdd 
        \end{equation}
        is an injective linear map.
    \end{proposition}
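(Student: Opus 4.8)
The plan is to split the statement into two parts: linearity (together with well-definedness of the target) and injectivity. Well-definedness is immediate: by Proposition \ref{proposition:rack-quasimorphism} each $\hat{\phi}_\lambda$ is a rack quasimorphism, so by the lemma of K\k{e}dra (5.2 in \cite{Kedra2024-jg}) the cochain $\delta^1 \hat{\phi}_\lambda$ is a bounded cocycle and hence represents a genuine class $[\delta^1 \hat{\phi}_\lambda]_\bdd \in H_\bdd^2(X, \RR)$. For linearity I would simply trace through the construction: the assignment $\lambda \mapsto \phi_\lambda$ is $\RR$-linear because $\phi_\lambda(g) = \sum_i \lambda_{s_i}(g_i)$ depends linearly on the family $\lambda$ while the factorization of $g$ does not depend on $\lambda$ at all. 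Evaluating on reduced elements gives that $\lambda \mapsto \hat{\phi}_\lambda$ is linear, and then $\hat{\phi}_\lambda \mapsto \delta^1 \hat{\phi}_\lambda$ and the passage to the cohomology class are linear as well, so $\hat{\phi}$ is linear.

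The heart of the argument is injectivity, which by linearity amounts to showing $\hat{\phi}(\lambda) \neq 0$ whenever $\lambda \neq 0$. Here I would invoke the second half of K\k{e}dra's lemma: if $\hat{\phi}_\lambda$ is unbounded on a single connected component of $X$, then $\delta^1 \hat{\phi}_\lambda$ is non-trivial in $H_\bdd^2(X, \RR)$, i.e. $\hat{\phi}(\lambda) \neq 0$. It therefore suffices to upgrade the plain unboundedness already established in Proposition \ref{proposition:rack-quasimorphism} to unboundedness on one component. Recall that this proof produces, for $\lambda \neq 0$, indices $s_0 \neq t$, an element $g_0 \in \adg(X_{s_0})$ and $x \in X_t$ with $\lambda_{s_0}(g_0) + \lambda_t(e_x) \neq 0$, together with reduced elements $(x, (g_0 e_x)^n)$ on which $\hat{\phi}_\lambda$ takes the value $n \cdot (\lambda_{s_0}(g_0) + \lambda_t(e_x))$, which is unbounded in $n$.

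The main obstacle — and the only point that needs care — is to check that these witnesses all lie in one connected component. I would argue this through the $\adg(X)$-action that defines the components. Since $\adg(X) = \bigast_{s \in S} \adg(X_s)$ and the generator $e_{(y,h)}$ acts by $(x, g) \cdot e_{(y,h)} = (x, g) \opr (y, h) = (x, g h^{-1} e_y h)$, the generator $e_{(y,h)}$ is identified with $h^{-1} e_y h$ in $\adg(X)$ and, by induction on word length, the action of an arbitrary $v \in \adg(X)$ is $(x, g) \cdot v = (x, g v)$ after reduction. Hence the orbit — that is, the connected component — of $(x, 1)$ contains every $(x, (g_0 e_x)^n)$, so $\hat{\phi}_\lambda$ is unbounded on that single component. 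K\k{e}dra's lemma then gives $[\delta^1 \hat{\phi}_\lambda]_\bdd \neq 0$, i.e. $\hat{\phi}(\lambda) \neq 0$, which establishes injectivity. I do not expect the finiteness hypotheses on $S$ and on the $X_s$ to enter this argument; they are inherited from Theorem \ref{theorem:main} and will be used only afterwards, to guarantee that $V_0(\adg(X))$ is infinite-dimensional.
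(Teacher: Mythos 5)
Your proof is correct, but your injectivity argument takes a genuinely different route from the paper's. The paper argues by assuming $[\delta^1 \hat{\phi}_\lambda]_\bdd = 0$, writing $\hat{\phi}_\lambda = f + \beta$ with $\beta$ bounded and $\delta^1 f = 0$, observing that $f$ is then constant on each connected component, and using the hypotheses $\ord{S} < \infty$ and finite generation of each $X_s$ to conclude that $X$ has only finitely many components, hence $f$ and therefore $\hat{\phi}_\lambda$ are bounded, which forces $\lambda = 0$ by the second half of Proposition \ref{proposition:rack-quasimorphism}. You instead invoke the ``moreover'' clause of K\k{e}dra's lemma, which requires unboundedness on a \emph{single} component, and you supply the needed verification that the witnesses $(x, (g_0 e_x)^n)$ all lie in the orbit of $(x, 1)$ under the $\adg(X)$-action; this is correct, since each generator $e_{(y,h)}$ acts on $(x,g)$ by right multiplication by $h^{-1} e_y h$ and these elements generate $\bigast_{s \in S} \adg (X_s)$. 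Your route buys something real: injectivity then holds without the finiteness hypotheses, which, as you note, are only needed afterwards to make $V_0(\adg(X))$ infinite-dimensional. The paper's route avoids having to track which component carries the unboundedness, at the price of invoking the finiteness assumptions; in effect it re-proves the relevant direction of K\k{e}dra's lemma in the situation at hand rather than citing it.
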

    \begin{proof}
        Linearity is clear by construction.

        To show injectivity, suppose $[ \delta^1 \hat{\phi}_\lambda ]_\bdd = 0$.
        There is a bounded function $\beta \in C_\bdd^1 (X, \RR)$ such that $\delta^1 \beta = \delta^1 \hat{\phi}_\lambda$.
        Let $f = \hat{\phi}_\lambda - \beta$. 
        Since $f$ is a rack homomorphism and $\RR$ is considered to be a trivial rack, $f$ is constant on each connected component of $X$.
        Since $\ord{S} < \infty$ and each $X_s$ is finitely generated, the number of connected components of $X$ is finite.
        Therefore, $f$ is bounded, and then $\hat{\phi}_\lambda$ is bounded. 
        By the latter half of Proposition \ref{proposition:rack-quasimorphism}, we can conclude that $\lambda = 0$.
    \end{proof}

    The space $V_0 (\adg (X))$ turns out to be infinite-dimensional by the nature of adjoint groups.
    \begin{proof}[Proof of Theorem \ref{theorem:main}]
        By Proposition \ref{proposition:injective-linear}, it is enough to show that the space $V_0 (\adg (X))$ is infinite-dimensional.
        
        Let $\ell^\infty_\mathrm{odd} (\ZZ)$ be the space of odd bounded functions $\ZZ \to \RR$.

        The subgroup of the adjoint group generated by its generator $e_x$ is infinite cyclic.
        Fix $s_0 \in S$ and $x_0 \in X_{s_0}$.
        Define a map $\iota \colon \ell^\infty_\mathrm{odd} (\ZZ) \to V_0 (\adg (X))$ by 
        \begin{equation}
            \iota (\sigma) = (\iota (\sigma)_s)_{s \in S}, \quad 
            \iota (\sigma)_s (g) = 
            \begin{cases}
                \sigma (n) & \text{for } s = s_0 \text{ and } g = e_{x_0}^n \\ 
                0 & \text{otherwise}
            \end{cases}.
        \end{equation}
        By construction, each function $\iota (\sigma)_s$ is odd and bounded, and a family $\iota (\sigma)$ is uniformly bounded.
        Thus, $\iota$ is well-defined.
        Clearly it is linear and injective.
        Therefore, the space $V_0 (\adg (X))$ is infinite-dimensional.
    \end{proof}
    \begin{remark}
        Proposition \ref{proposition:injective-linear} also holds for quandles. 
        More precisely, if $X_s$'s are quandles in addition, then $\delta^1 \hat{\phi}_\lambda$ is a quandle cocycle.
        Indeed, $\delta^1 \hat{\phi}_\lambda$ is a rack cocycle by the above argument, and 
        \begin{equation}
            \delta^1 \hat{\phi}_\lambda ((x, g), (x, g))
            =
            \phi_\lambda (g) - \phi_\lambda (g)
            =
            0
        \end{equation}
        by the operation on $\bigast_{s \in S} X_s$. 
    \end{remark}
    \begin{remark}[cf. \cite{Rolli2009-wy}, Corollary 4.3]
        For the free product $\Gamma$ of groups in general, the space $V_0 (\Gamma)$ is not necessarily infinite-dimensional.
        For example, when $\Gamma = \mathrm{PSL}_2 (\ZZ) \cong \ZZ / {2} \ast \ZZ / {3}$, $\dim V_0 (\Gamma) = 1$.
    \end{remark}

    \section{Homogeneous group quasimorphisms and free racks}\label{section:homogeneous}
    In this section, we give another proof of Corollary \ref{corollary:main} using homogeneous group quasimorphisms.

    A group quasimorphism $\phi \colon \Gamma \to \RR$ is \emph{homogeneous} if $\phi (g^n) = n \cdot \phi (g)$ for any $g \in \Gamma$ and $n \in \ZZ$.
    We write the space of homogeneous quasimorphisms on $\Gamma$ by $Q^\homog (\Gamma)$.
    It is known that a homogeneous quasimorphism is constant on conjugacy classes, that is, $\phi (h^{-1} g h) = \phi (g)$ for any $g, h \in \Gamma$.
    Analogous statements of Proposition \ref{proposition:rack-quasimorphism} and \ref{proposition:injective-linear} hold with slight modification of arguments.

    We can construct a quandle quasimorphism on the free quandle on two generators using a homogeneous group quasimorphism on the free group on two generators (cf. Example 3.15 in \cite{Kedra2024-jg}).
    This construction is generalized to the free product of racks as follows.
    \begin{proposition}\label{proposition:rack-quasimorphism-homogeneous}
        Let $X = \bigast_{s \in S} X_s$ be the free product of racks with $2 \leq \ord{S} < \infty$. 
        Let $\phi \colon \adg (X) \to \RR$ a homogeneous quasimorphism.
        If each $X_s$ is finitely generated and $\ord{S} < \infty$, then a map $\hat{\phi} \colon X \to \RR$ defined by 
        \begin{equation}
            \hat{\phi} (x, g) = \phi (g), \label{equation:using_homogeneous}
        \end{equation}
        where $(x, g)$ is reduced, is a rack quasimorphism.
    \end{proposition}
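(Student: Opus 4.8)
The plan is to mimic the proof of Proposition \ref{proposition:rack-quasimorphism}, but with the two defining features of a \emph{homogeneous} quasimorphism replacing the oddness of the family $\lambda$: homogeneity gives $\phi(g^{-1}) = -\phi(g)$, and a homogeneous quasimorphism is conjugation-invariant, $\phi(w^{-1} g w) = \phi(g)$ for all $g, w \in \adg(X)$. Assume $(x,g)$ and $(y,h)$ are reduced with $y \in X_{t_0}$, so that $(x,g) \opr (y,h) = (x, g h^{-1} e_y h)$ and $e_y \in \adg(X_{t_0})$. The first step is the clean estimate furnished by conjugation-invariance: since $\phi(g h^{-1} e_y h) = \phi\bigl(h \cdot g h^{-1} e_y h \cdot h^{-1}\bigr) = \phi(h g h^{-1} \cdot e_y)$ and $\phi(h g h^{-1}) = \phi(g)$, the quasimorphism inequality for $\phi$ yields
\begin{equation}
    \ord{\phi(g h^{-1} e_y h) - \phi(g) - \phi(e_y)} \leq D(\phi).
\end{equation}

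The second step produces a uniform bound on the generator term $\phi(e_y)$, and this is where the finiteness hypotheses are used. Since $\ord{S} < \infty$ and each $X_s$ is finitely generated, $X$ is finitely generated by a finite set $S' \subset X$, and by the expression of a generator $e_x = e_{s_n}^{-\varepsilon_n} \cdots e_{s_1}^{-\varepsilon_1} e_{s_0} e_{s_1}^{\varepsilon_1} \cdots e_{s_n}^{\varepsilon_n}$ recalled before the lemma on finite generation, every $e_x$ ($x \in X$) is conjugate in $\adg(X)$ to some $e_{s_0}$ with $s_0 \in S'$. Conjugation-invariance then gives $\phi(e_x) = \phi(e_{s_0})$, whence $M := \sup_{x \in X} \ord{\phi(e_x)} = \max_{s \in S'} \ord{\phi(e_s)} < \infty$. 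Combining this with the first step, $\ord{\phi(g) - \phi(g h^{-1} e_y h)} \leq M + D(\phi)$. In particular, whenever $(x, g h^{-1} e_y h)$ is \emph{already} reduced we obtain at once that $\ord{\hat{\phi}(x,g) - \hat{\phi}((x,g) \opr (y,h))} \leq M + D(\phi)$.

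The remaining difficulty, which I expect to be the main obstacle, is that $(x, g h^{-1} e_y h)$ need not be reduced: when $g$ is completely cancelled against $h^{-1} e_y h$, the leading syllable of $w := g h^{-1} e_y h$ may lie in $\adg(X_t)$ (where $x \in X_t$), and passing to the reduced representative $(x \cdot w_1,\, w_1^{-1} w)$ replaces $\phi(w)$ by $\phi(w_1^{-1} w)$, which by homogeneity and the quasimorphism inequality differs from $\phi(w)$ by $\phi(w_1)$ up to $D(\phi)$. I would control the reduced group-part exactly as in Proposition \ref{proposition:rack-quasimorphism}, running the same analysis of the cancellation between $g$ and $h^{-1} e_y h$ (the cases $1 \leq r < n$, $r = n$, and $r > n$); in each case the reduced group-part is obtained from $g$ by deleting, inserting, or merging a bounded number of syllables together with the single factor $e_y$, whose $\phi$-value is already bounded by $M$. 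The genuinely delicate point is the estimate of $\phi(w_1)$ for the deleted leading syllable $w_1 \in \adg(X_t)$: here homogeneity and conjugation-invariance alone are not enough, since $\phi$ is in general unbounded on the factor $\adg(X_t)$, and this term plays exactly the role that $\norm{\lambda}_\infty$ played in Proposition \ref{proposition:rack-quasimorphism}. Thus the crux is to bound the $\phi$-contribution of each such syllable by the size of $\phi$ on the factors; once every syllable involved contributes a bounded amount, summing the finitely many contributions yields a uniform bound on $\ord{\hat{\phi}(x,g) - \hat{\phi}((x,g) \opr (y,h))}$, showing that $\hat{\phi}$ is a rack quasimorphism.
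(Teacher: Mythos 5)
Your first two steps reproduce, almost verbatim, the entirety of the paper's proof. The paper writes
\begin{equation*}
    \ord{\hat{\phi} (x, g) - \hat{\phi} (x, g h^{-1} e_y h)}
    \leq
    \ord{\phi (g) - \phi (g h^{-1} e_y h) + \phi (h^{-1} e_y h)} + \ord{\phi (h^{-1} e_y h)}
    \leq
    D (\phi) + \ord{\phi (e_y)}
\end{equation*}
and then asserts the uniform bound $\ord{\phi(e_y)} \leq M$ from finite generation; your justification of that bound (every $e_y$ is conjugate to some $e_{s_0}$ with $s_0$ in a finite generating set, and $\phi$ is conjugation-invariant) is exactly the intended argument, which the paper leaves implicit. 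Up to that point your proposal matches the paper. The divergence is your third paragraph: you observe that $(x, g h^{-1} e_y h)$ need not be reduced, you correctly identify the cost of re-reducing as $\phi(w_1)$ for a leading syllable $w_1 \in \adg(X_t)$, you note that $\phi$ is in general unbounded on the factor $\adg(X_t)$, you call this "the crux" --- and then you do not carry out the estimate. As a proof, the proposal is therefore incomplete at precisely the step you single out.

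You should know, however, that the step you leave open is not a routine gap: the paper's own proof silently skips it (it evaluates $\hat{\phi}(x, g h^{-1} e_y h)$ as $\phi(g h^{-1} e_y h)$ without reducing), and the difficulty is real. It cannot be closed by "bounding the $\phi$-contribution of each syllable", because no such bound exists. Concretely, take the free rack on $\{p, q\}$, so $\adg(X)$ is free on $e_p, e_q$, and let $\phi$ be the $e_p$-exponent-sum homomorphism (a homogeneous quasimorphism with $D(\phi) = 0$, $\phi(e_p) = 1$, $\phi(e_q) = 0$). The elements $(p, e_q)$ and $(q, e_p^k e_q)$ are reduced, and
\begin{equation*}
    (p, e_q) \opr (q, e_p^k e_q)
    = (p,\; e_q \cdot e_q^{-1} e_p^{-k} e_q e_p^k e_q)
    = (p,\; e_p^{-k} e_q e_p^k e_q)
    \sim (p,\; e_q e_p^k e_q),
\end{equation*}
so $\hat{\phi}\bigl((p, e_q) \opr (q, e_p^k e_q)\bigr) - \hat{\phi}(p, e_q) = k$, unbounded in $k$, whereas the claimed bound is $D(\phi) + M = 1$. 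So in the full-cancellation case the reduced group part differs from $g h^{-1} e_y h$ by a syllable on which $\phi$ is unbounded, and with the stated definition of $\hat{\phi}$ the conclusion fails; either the definition of $\hat{\phi}$ or the statement needs repair. In short: your proposal is honest about where the problem lies and correctly diagnoses why oddness of $\lambda$ (which made the analogous correction cost only $\norm{\lambda}_\infty$ in Proposition \ref{proposition:rack-quasimorphism}) has no substitute here, but it does not close the gap --- and the gap is present, unacknowledged, in the paper's proof as well.
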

    \begin{proof}
        Since $\phi$ is a homogeneous quasimorphism, we have 
        \begin{equation}
            \begin{aligned}
                &\quad
                \ord{\hat{\phi} (x, g) - \hat{\phi} (x, g h^{-1} e_y h)} \\
                &\leq
                \ord{\phi (g) - \phi (g h^{-1} e_y h) + \phi (h^{-1} e_y h)} + \ord{\phi (h^{-1} e_y h)} \\
                &\leq 
                D (\phi) + \ord{\phi (e_y)}.
            \end{aligned}
        \end{equation}

        Since each $X_s$ is finitely generated and $\ord{S} < \infty$, there is a constant $M \geq 0$ such that $\ord{\phi (e_y)} \leq M$ for any $y \in \sqcup_{s \in S} X_s$.
        Therefore, $\hat{\phi}$ is a rack quasimorphism. 
    \end{proof}
    \begin{proposition}\label{proposition:injective-linear-homogeneous}
        Let $X = \bigast_{s \in S} X_s$ be the free product of finitely generated racks with $2 \leq \ord{S} < \infty$, and $\phi \colon \adg (X) \to \RR$ a quasimorphism on the adjoint group of $X$.
        Then a map $\eta \colon Q^\homog (\adg (X)) \to H_\bdd^2 (X, \RR)$ defined by 
        \begin{equation}
            \eta (\phi) = [\delta^1 \hat{\phi}]_\bdd 
        \end{equation}
        is an injective linear map.
    \end{proposition}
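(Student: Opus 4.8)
The plan is to mirror the structure of the proof of Proposition~\ref{proposition:injective-linear}, adapting it to homogeneous quasimorphisms. Linearity of $\eta$ is immediate from the construction, since $\widehat{(\phi_1 + \phi_2)} = \hat{\phi}_1 + \hat{\phi}_2$ and $\delta^1$ is linear. The substance lies in injectivity, so I would assume $[\delta^1 \hat{\phi}]_\bdd = 0$ and argue that $\phi = 0$ as a homogeneous quasimorphism.

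To establish injectivity, I would first run the exact same opening move as in Proposition~\ref{proposition:injective-linear}: from $[\delta^1 \hat{\phi}]_\bdd = 0$ there is a bounded $\beta \in C_\bdd^1(X,\RR)$ with $\delta^1 \beta = \delta^1 \hat{\phi}$, so $f := \hat{\phi} - \beta$ is a rack homomorphism into the trivial rack $\RR$, hence constant on each connected component. Since $\ord{S} < \infty$ and each $X_s$ is finitely generated, there are only finitely many connected components, so $f$ is bounded, and therefore $\hat{\phi} = f + \beta$ is bounded on $X$. The remaining and genuinely new step is to deduce $\phi = 0$ from the boundedness of $\hat{\phi}$. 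This is where the argument diverges from the Rolli case: boundedness of $\hat{\phi}$ only controls $\phi$ on elements $g$ that appear as the second coordinate of a reduced element of $X$, and I must leverage homogeneity to propagate this to all of $\adg(X)$.

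The key observation is that $\hat{\phi}(x, g^n) = \phi(g^n) = n \cdot \phi(g)$ by homogeneity, whenever $(x, g^n)$ is reduced. So if I can exhibit, for a suitable cofinal supply of group elements $g$, a base point $x$ such that all powers $(x, g^n)$ are reduced, then boundedness of $\hat{\phi}$ forces $\phi(g) = 0$. Concretely, for any $g \in \adg(X) = \bigast_{s} \adg(X_s)$ with $g \neq 1$, I would pick $x \in X_t$ with $t$ different from the leading syllable's index of $g$ (possible since $\ord{S} \geq 2$), arrange $g$ by conjugation so that $(x,g)$ is reduced, and check that the powers $(x, g^n)$ remain reduced for all $n$; boundedness of $\{n\,\phi(g)\}_n$ then yields $\phi(g) = 0$. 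Finally, since a homogeneous quasimorphism is a class function and is determined by its values on a generating-and-conjugacy-closed set, I would argue that vanishing on all such $g$ forces $\phi \equiv 0$.

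The main obstacle I anticipate is the bookkeeping in the reduced-form analysis: verifying that $(x, g^n)$ is genuinely reduced for every $n$ and that no cancellation collapses the word as $n$ grows, especially when $g$ is not already cyclically reduced. If $g$ has a cyclically-unreduced factorization the powers $g^n$ can telescope, so I would either first replace $g$ by a cyclically reduced conjugate (using that $\phi$ is conjugation-invariant) or choose the basepoint $x$ so that the interposed generator $e_x$ blocks cancellation across successive copies of $g$. Homogeneity is exactly the tool that converts the merely bounded constraint into an exact vanishing, so the interplay between the reduced-word combinatorics in the free product and the homogeneity identity $\phi(g^n) = n\,\phi(g)$ is the crux of the argument.
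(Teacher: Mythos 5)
Your proposal is correct and takes essentially the same route as the paper: the paper also deduces boundedness of $\hat{\phi}$ by repeating the connected-component argument of Proposition~\ref{proposition:injective-linear}, then fixes $x \in X_{s_0}$ with $s_0 \neq s_1$ (the index of the leading syllable of $g$) so that $(x, g^n)$ is reduced and $\hat{\phi}(x, g^n) = \phi(g^n) = n \cdot \phi(g)$, whence $\phi(g) = 0$. The cancellation issue you flag is harmless (and silently assumed by the paper): writing $g$ as a conjugate of a cyclically reduced element shows the leading syllable of $g^n$ lies in $\adg(X_{s_1})$ whenever $g^n \neq 1$, so $(x, g^n)$ stays reduced, and no separate class-function argument is needed.
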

    Almost same argument in the proof of Proposition \ref{proposition:injective-linear} also works in this case. 
    \begin{proof}
        Suppose $[\delta^1 \hat{\phi}]_\bdd = 0$ then $\hat{\phi}$ is bounded by the same argument in Proposition \ref{proposition:injective-linear}.
        Let $g \in \adg (X)$ with the factorization $g = g_1 \cdots g_m$ and $n \in \ZZ$. 
        When $g_1 \in \adg (X_{s_1})$, we can choose $s_0 \not= s_1$ and fix arbitrary $x \in X_{s_0}$.
        Then we have 
        \begin{equation}
            \hat{\phi} (x, g^n) = \phi (g^n) = n \cdot \phi (g).
        \end{equation}
        Since $\hat{\phi}$ is bounded, $\phi = 0$.
    \end{proof}

    By combining the following results, we can conclude that the space $Q^\homog (\Gamma)$ is infinite-dimensional.
    The kernel of the comparison map $c^n \colon H^n_\bdd (\Gamma, \RR) \to H^n (\Gamma, \RR)$ is called the \emph{exact bounded cohomology}, $EH^n_\bdd (\Gamma, \RR)$.
    This is the subspace consisting of the classes which represented by a bounded cocycle.
    The following relation between the space of homogeneous quasimorphisms and the second exact bounded cohomology is well-known (see for example \cite{Frigerio2017-kr}):
    \begin{lemma}
        $Q^\homog (\Gamma) / {\hom (\Gamma, \RR)} \cong EH^2_\bdd (\Gamma, \RR)$.
    \end{lemma}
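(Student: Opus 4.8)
The plan is to realize the isomorphism as the map induced by $\phi \mapsto [\delta^1 \phi]_\bdd$. First I would observe that for a homogeneous quasimorphism $\phi$ the coboundary $\delta^1 \phi$ is bounded, with $\norm{\delta^1 \phi}_\infty = D(\phi) < \infty$, so it represents a class in $H^2_\bdd(\Gamma, \RR)$. Since $\delta^1 \phi$ is by construction a coboundary in the unbounded complex, the comparison map sends it to zero: $c^2 [\delta^1 \phi]_\bdd = [\delta^1 \phi] = 0$. Hence the class lies in $EH^2_\bdd(\Gamma, \RR)$, and we obtain a linear map $\Phi \colon Q^\homog(\Gamma) \to EH^2_\bdd(\Gamma, \RR)$, $\Phi(\phi) = [\delta^1 \phi]_\bdd$.

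Next I would compute the kernel of $\Phi$. If $\phi \in \hom(\Gamma, \RR)$ then $\delta^1 \phi = 0$, so $\hom(\Gamma, \RR) \subseteq \ker \Phi$. Conversely, suppose $[\delta^1 \phi]_\bdd = 0$, so that $\delta^1 \phi = \delta^1 \beta$ for some bounded $\beta \in C^1_\bdd(\Gamma, \RR)$. Then $\phi - \beta$ satisfies $\delta^1(\phi - \beta) = 0$, i.e.\ it is a homomorphism, in particular a homogeneous quasimorphism. The difference $\beta = \phi - (\phi - \beta)$ is therefore a difference of two homogeneous quasimorphisms, hence itself a homogeneous quasimorphism; being also bounded, it vanishes identically, since a bounded homogeneous quasimorphism $\psi$ satisfies $\ord{n \psi(g)} = \ord{\psi(g^n)} \leq \norm{\psi}_\infty$ for all $n$, forcing $\psi = 0$. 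Thus $\phi = \phi - \beta \in \hom(\Gamma, \RR)$, so $\ker \Phi = \hom(\Gamma, \RR)$ and $\Phi$ descends to an injection $Q^\homog(\Gamma)/\hom(\Gamma, \RR) \hookrightarrow EH^2_\bdd(\Gamma, \RR)$.

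The remaining and most substantial step is surjectivity. Given a class in $EH^2_\bdd(\Gamma, \RR)$, I would represent it by a bounded cocycle $\omega$; since its comparison image vanishes in $H^2(\Gamma, \RR)$, we may write $\omega = \delta^1 \beta$ for some a priori unbounded function $\beta \colon \Gamma \to \RR$. As $\delta^1 \beta = \omega$ is bounded, $\beta$ is a group quasimorphism with $D(\beta) = \norm{\omega}_\infty$. I would then pass to the homogenization $\bar\beta(g) = \lim_{n \to \infty} \beta(g^n)/n$; the standard subadditivity estimate $\ord{\beta(g^{m+n}) - \beta(g^m) - \beta(g^n)} \leq D(\beta)$ shows that this limit exists, that $\bar\beta$ is a homogeneous quasimorphism, and that $\norm{\bar\beta - \beta}_\infty \leq D(\beta) < \infty$. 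Consequently $\delta^1 \bar\beta - \omega = \delta^1(\bar\beta - \beta)$ is the coboundary of a bounded function, so $[\delta^1 \bar\beta]_\bdd = [\omega]_\bdd$, which exhibits the given class as $\Phi(\bar\beta)$.

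The step I expect to be the main obstacle is the homogenization in the surjectivity argument, namely verifying that the limit defining $\bar\beta$ converges, that $\bar\beta$ is again a homogeneous quasimorphism, and that it remains within bounded distance of $\beta$. Everything else reduces to the elementary fact that a bounded homogeneous quasimorphism is identically zero, together with the definition of the comparison map.
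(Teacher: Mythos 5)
Your argument is correct and is the standard proof of this fact; the paper itself gives no proof, citing it as well-known from \cite{Frigerio2017-kr}, and your route (the map $\phi \mapsto [\delta^1\phi]_\bdd$, kernel computation via the vanishing of bounded homogeneous quasimorphisms, and surjectivity via homogenization of the primitive of a bounded cocycle) is exactly the argument found there. The one step you flag as the main obstacle, the homogenization $\bar\beta(g) = \lim_{n\to\infty}\beta(g^n)/n$, is indeed the only nontrivial analytic input, and your appeal to the subadditivity estimate $\ord{\beta(g^{m+n}) - \beta(g^m) - \beta(g^n)} \leq D(\beta)$ together with $\norm{\bar\beta - \beta}_\infty \leq D(\beta)$ is the standard and correct way to handle it.
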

    Recall that the adjoint group of the free rack is the free group and that the free group of rank $\geq 2$ is non-elementary word-hyperbolic.
    \begin{proposition}[cf. \cite{Epstein1997-od}, Theorem 3.1]
        If a group $\Gamma$ is non-elementary word-hyperbolic, then the exact second bounded cohomology $EH_\bdd^2 (\Gamma, \RR)$ is infinite-dimensional.
    \end{proposition}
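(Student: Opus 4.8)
The plan is to reduce to quasimorphisms via the lemma recalled just above, which identifies $EH^2_\bdd(\Gamma,\RR)$ with $Q^\homog(\Gamma)/{\hom(\Gamma,\RR)}$. Since $\Gamma$ is word-hyperbolic it is finitely generated, so $\hom(\Gamma,\RR)$ is finite-dimensional. It therefore suffices to produce an infinite family of homogeneous quasimorphisms on $\Gamma$ that is linearly independent modulo $\hom(\Gamma,\RR)$; equivalently, to show that $Q^\homog(\Gamma)$ is infinite-dimensional in a way not accounted for by genuine homomorphisms.

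First I would fix the geometry. The group $\Gamma$ acts properly and cocompactly by isometries on a Cayley graph $\mathcal{C}$ for a finite generating set, and $\mathcal{C}$ is $\delta$-hyperbolic for some $\delta \geq 0$ because $\Gamma$ is word-hyperbolic. Following Brooks and Epstein--Fujiwara, for a finite geodesic segment (equivalently a reduced word) $w$ I would define a counting function $c_w \colon \Gamma \to \RR$ recording the maximal number of essentially disjoint occurrences of $w$ along a geodesic from the identity to $g$, and then antisymmetrize to $h_w = c_w - c_{\bar w}$, where $\bar w$ is the reversed segment. Homogenizing, $\overline{h_w}(g) = \lim_{n \to \infty} h_w(g^n)/n$, produces a candidate homogeneous quasimorphism.

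The technical heart is to show that $\delta^1 h_w$ is bounded, so that $\overline{h_w} \in Q^\homog(\Gamma)$. This is where hyperbolicity is essential: by $\delta$-thinness of geodesic triangles, a geodesic from the identity to $gh$ fellow-travels the broken path through $g$ outside a bounded neighborhood of the three sides, so the number of occurrences of the fixed segment $w$ is additive up to an error depending only on $\delta$ and the length of $w$, not on $g$ and $h$. Controlling this error uniformly --- together with stability of quasigeodesics, needed in order to compare different geodesics between the same endpoints --- is the delicate estimate, and is precisely the content of Theorem 3.1 in \cite{Epstein1997-od}. I expect this defect bound to be the main obstacle.

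Finally I would establish infinite-dimensionality. Since $\Gamma$ is non-elementary hyperbolic, it contains a quasiconvex, hence undistorted, free subgroup $F$ of rank two, generated by two independent loxodromic elements. Choosing the counting segments $w_n$ to be reduced words supported on $F$, quasiconvexity guarantees that geodesics in $\mathcal{C}$ between points of $F$ track geodesics in $F$ up to bounded Hausdorff distance; hence the restrictions of the $\overline{h_{w_n}}$ to $F$ agree, up to uniformly bounded error and after homogenization, with the classical Brooks quasimorphisms on $F$. The latter are linearly independent modulo $\hom(F,\RR)$ in $Q^\homog(F)$, and $\hom(F,\RR)$ is finite-dimensional. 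Because homomorphisms restrict to homomorphisms, linear independence of the restrictions modulo $\hom(F,\RR)$ forces linear independence of the $\overline{h_{w_n}}$ modulo $\hom(\Gamma,\RR)$. Thus $Q^\homog(\Gamma)/{\hom(\Gamma,\RR)} \cong EH^2_\bdd(\Gamma,\RR)$ is infinite-dimensional.
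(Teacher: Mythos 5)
First, note that the paper does not actually prove this proposition: it is imported from Epstein--Fujiwara, with the citation ``cf.\ Theorem 3.1'' standing in for the proof, so there is no in-paper argument to compare yours against. Judged on its own, your sketch correctly identifies the standard strategy --- reduce to $Q^\homog (\Gamma) / {\hom (\Gamma, \RR)}$ via the preceding lemma and produce infinitely many independent counting quasimorphisms --- but it has two genuine gaps. The first is structural: you explicitly defer ``the delicate estimate'' (boundedness of $\delta^1 h_w$) to Theorem 3.1 of \cite{Epstein1997-od}, which is the very statement being proved, so nothing in your write-up establishes the defect bound. Moreover, the naive counting function ``maximal number of disjoint occurrences of $w$ along a geodesic from $1$ to $g$'' is not well behaved in a general hyperbolic group: geodesics between two points are far from unique, and different geodesics can contain very different numbers of literal copies of $w$. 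This is precisely why Epstein and Fujiwara replace it by a quantity of the form $d(1,g) - \inf_\alpha \bigl( \ell (\alpha) - W \, \lvert \alpha \rvert_w \bigr)$, an infimum over all paths with a width parameter, and prove the defect bound for that function; your appeal to $\delta$-thinness is the right intuition but is not a proof for the function you defined.

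The second gap is in the linear-independence step. You assert that, after restricting to a quasiconvex free subgroup $F$, the homogenizations $\overline{h_{w_n}}$ agree with the classical Brooks quasimorphisms on $F$ up to uniformly bounded error. Quasiconvexity gives that geodesics between points of $F$ stay in a bounded neighbourhood of $F$, but it does not give that occurrences of $w_n$ along such geodesics correspond, with discrepancy bounded independently of $n$ (note $\lvert w_n \rvert \to \infty$), to occurrences of $w_n$ in the reduced word over $F$; the word paths of $F$ are only quasigeodesics in the ambient Cayley graph. That comparison is plausible but is itself a nontrivial estimate, and it is not how Epstein--Fujiwara argue: they instead evaluate their quasimorphisms directly on powers of suitably chosen elements built from two independent loxodromics, arranging a pattern of values that forces linear independence modulo $\hom (\Gamma, \RR)$. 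Either route can likely be completed, but as written both the defect bound and the independence claim are asserted rather than proved.
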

    Thus, $Q^\homog (\adg X)$ is infinite-dimensional, and we obtain another proof of Corollary \ref{corollary:main}.
    Note that the free product of groups is not necessarily hyperbolic, but relatively hyperbolic.

    \bibliographystyle{plain}
    \bibliography{references}

\end{document}